\begin{document}

\setcounter{page}{1}

\newarticle

\title{Some linear and nonlinear integral inequalities\\
on time scales in two independent variables}

\author{R.A.C.~Ferreira\,$^1$ and D.F.M.~Torres\,$^2$}

\address{Department of Mathematics,
University of Aveiro,
3810-193 Aveiro, Portugal}

\renewcommand{\thefootnote}{} \footnote{$^1$ Supported by the Portuguese Foundation for Science and Technology (FCT) through the PhD fellowship SFRH/BD/39816/2007. E-mail: ruiacferreira@ua.pt}

\renewcommand{\thefootnote}{} \footnote{$^2$ Supported by FCT through the R\&D unit CEOC, cofinanced by the EC fund FEDER/POCI
2010. E-mail: delfim@ua.pt (corresponding author)}

\date{}

\abstract{We establish some linear and nonlinear
integral inequalities of Gronwall-Bellman-Bihari type for functions with two independent variables on general time scales.
The results are illustrated with examples, obtained by fixing
the time scales to concrete ones.
An estimation result for the solution
of a partial delta dynamic equation is given
as an application.}

\keywords{integral inequalities; Gronwall-Bellman-Bihari inequalities; time scales; two independent variables.}

\subjclass{26D15, 45K05.}

\bigskip


\section{\normalsize Introduction}

Inequalities have always been of great
importance for the development of several
branches of mathematics.
For instance, in approximation theory and numerical analysis,
linear and nonlinear inequalities, in one and more than one
variable, play an important role in the estimation
of approximation errors \cite{Pch}.

Time scales, which are defined as nonempty closed subsets of the
real numbers, are the basic but fundamental ingredient that
permits to define a rich calculus that encompasses both differential and difference tools \cite{Hilger90,Hilger97}.
At the same time one gains more (\textrm{cf.}, \textrm{e.g.},
Corollary~\ref{cor:3.1}).
For an introduction to the calculus on time scales
we refer the reader to \cite{livro}
and \cite{pdiff,miots}, respectively
for functions of one and more than one independent variables.

Integral inequalities of Gronwall-Bellman-Bihari type for functions of a single variable on a time
scale can be found in \cite{inesurvey,Pach,RT,Bihary,Gronwall}.
To the best of the authors knowledge, no such results exist on the literature of time scales when functions of two independent variables are considered. It is our aim to obtain here a first insight on this type of inequalities.


\section{\normalsize Linear inequalities}
\label{sec:mainResults}

Throughout the text we assume that $\mathbb{T}_1$
and $\mathbb{T}_2$ are
time scales with at least two points and consider the time scales
intervals $\tilde{\mathbb{T}}_1=[a_1,\infty)\cap\mathbb{T}_1$ and
$\tilde{\mathbb{T}}_2=[a_2,\infty)\cap\mathbb{T}_2$, for
$a_1\in\mathbb{T}_1$, and $a_2\in\mathbb{T}_2$. We also use the
notations $\mathbb{R}^+_0=[0,\infty)$ and
$\mathbb{N}_0=\mathbb{N}\cup\{0\}$, while
$e_p(t,s)$ denotes the usual exponential function on time scales with $p \in \mathcal{R}$, \textrm{i.e.}, $p$
a regressive function \cite{livro}.

\begin{theorem}
\label{teo1} Let $u(t_1,t_2), a(t_1,t_2), f(t_1,t_2)\in
C(\tilde{\mathbb{T}}_1\times\tilde{\mathbb{T}}_2,\mathbb{R}_0^+)$
with $a(t_1,t_2)$ nondecreasing in each of its variables. If
\begin{equation}
\label{in0} u(t_1,t_2)\leq
a(t_1,t_2)+\int_{a_1}^{t_1}\int_{a_2}^{t_2}
f(s_1,s_2)u(s_1,s_2)\Delta_1 s_1\Delta_2 s_2
\end{equation}
for $(t_1,t_2)\in \tilde{\mathbb{T}}_1\times\tilde{\mathbb{T}}_2$,
then
\begin{equation}
\label{in2} u(t_1,t_2)\leq
a(t_1,t_2) e_{\int_{a_2}^{t_2}f(t_1,s_2)\Delta_2 s_2}(t_1,a_1)\, , \quad (t_1,t_2)\in \tilde{\mathbb{T}}_1\times\tilde{\mathbb{T}}_2.
\end{equation}
\end{theorem}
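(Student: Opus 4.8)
The plan is to freeze the second variable, normalize by $a$, and reduce (\ref{in0}) to a one-variable linear integral inequality in $t_1$, to which the classical Gronwall inequality on time scales (see \cite{livro}) applies. Since $a$ is only assumed nonnegative, I would first replace $a$ by $a+\varepsilon$ with $\varepsilon>0$, which preserves both the monotonicity of $a$ and the validity of (\ref{in0}); sending $\varepsilon\to 0$ at the end recovers the stated bound, so I may assume $a>0$ and put $v=u/a$. Dividing (\ref{in0}) by $a(t_1,t_2)$ and using that $a$ is nondecreasing in each variable, so that $a(s_1,s_2)\le a(t_1,t_2)$ whenever $s_1\le t_1$ and $s_2\le t_2$, I obtain
\[
v(t_1,t_2)\le 1+\int_{a_1}^{t_1}\int_{a_2}^{t_2} f(s_1,s_2)\,v(s_1,s_2)\,\Delta_2 s_2\,\Delta_1 s_1,
\]
where, by Fubini's theorem on time scales, the iterated integral equals the one in (\ref{in0}).

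Next I fix $t_2\in\tilde{\mathbb{T}}_2$ and set $z(t_1)=1+\int_{a_1}^{t_1}\int_{a_2}^{t_2} f(s_1,s_2)v(s_1,s_2)\,\Delta_2 s_2\,\Delta_1 s_1$. The crucial step is the uniform estimate $v(s_1,s_2)\le z(s_1)$, valid for all $a_1\le s_1\le t_1$ and all $a_2\le s_2\le t_2$: applying the previous display at $(s_1,s_2)$ and enlarging the inner upper limit from $s_2$ to $t_2$, which is legitimate because $fv\ge 0$, gives $v(s_1,s_2)\le 1+\int_{a_1}^{s_1}\int_{a_2}^{t_2} fv=z(s_1)$.

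With this in hand I feed the bound back into the inner integral: since $v(s_1,s_2)\le z(s_1)$ for every $s_2\in[a_2,t_2]$,
\[
z(t_1)\le 1+\int_{a_1}^{t_1}\Big(\int_{a_2}^{t_2} f(s_1,s_2)\,\Delta_2 s_2\Big)z(s_1)\,\Delta_1 s_1 = 1+\int_{a_1}^{t_1} p(s_1)\,z(s_1)\,\Delta_1 s_1,
\]
where $p(s_1)=\int_{a_2}^{t_2} f(s_1,s_2)\,\Delta_2 s_2\ge 0$. Because $p\ge 0$ we have $1+\mu p\ge 1>0$, so $p\in\mathcal{R}$ and $e_p(\cdot,a_1)$ is well defined; the one-variable Gronwall inequality on time scales then gives $z(t_1)\le e_p(t_1,a_1)$. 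Since $v(t_1,t_2)\le z(t_1)$, it follows that $u(t_1,t_2)=a(t_1,t_2)v(t_1,t_2)\le a(t_1,t_2)\,e_p(t_1,a_1)$; as $t_2$ is arbitrary and $p$ is exactly the subscript appearing in (\ref{in2}), letting $\varepsilon\to 0$ completes the proof.

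The main obstacle is the uniform estimate $v(s_1,s_2)\le z(s_1)$: it is what allows the $t_2$-dependent quantity $z(s_1)$ to be pulled out of the $\Delta_2$-integral, collapsing the two-variable inequality to a scalar one. It rests essentially on the sign condition $f,u\ge 0$, which makes the double integral monotone in its upper limits, together with the monotonicity of $a$ used in the normalization step -- precisely the hypotheses of the theorem.
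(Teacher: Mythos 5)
Your proof is correct, but it follows a genuinely different route from the paper's. The normalization is the same (replace $a$ by $a+\varepsilon$, pass to $v=u/(a+\varepsilon)$ satisfying $v\le 1+\int_{a_1}^{t_1}\int_{a_2}^{t_2} fv$), but from there the paper proceeds \emph{differentially}: it defines $v$ as the right-hand side of that inequality, computes the mixed partial delta derivative $\frac{\partial}{\Delta_2 t_2}\left(\frac{\partial v}{\Delta_1 t_1}\right)\le fv$, uses a quotient-rule manipulation involving $v(t_1,\sigma_2(t_2))$ (which needs $v$ positive and nondecreasing) to rewrite this as $\frac{\partial}{\Delta_2 t_2}\left(\frac{\partial v(t_1,t_2)}{\Delta_1 t_1}\big/v(t_1,t_2)\right)\le f$, integrates in the second variable to reach the dynamic inequality $\frac{\partial v}{\Delta_1 t_1}\le p(t_1)\,v$, and concludes with the comparison form of Gronwall's inequality (\cite[Theorem~5.4]{inesurvey}), using $v(a_1,t_2)=1$. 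You never differentiate: you freeze $t_2$, prove the uniform bound $v(s_1,s_2)\le z(s_1)$ simply by enlarging the upper limit of the inner integral (legitimate because the integrand is nonnegative), feed it back to obtain the scalar integral inequality $z(t_1)\le 1+\int_{a_1}^{t_1}p(s_1)z(s_1)\Delta_1 s_1$, and invoke the one-variable Gronwall inequality in integral form from \cite{livro}. Your argument is more elementary: it bypasses the partial delta-derivative calculus of \cite{pdiff,miots}, the $\sigma_2$ quotient trick, and the monotonicity-of-$v$ considerations, needing only monotonicity of the time-scale integral in its limits (your interchange of the order of integration is covered by the Fubini theorem of \cite{miots}). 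What the paper's differential machinery buys is reusability: essentially the same derivative computation is recycled verbatim in the proofs of Theorems~\ref{thm:abv}, \ref{teo2} and the subsequent kernel/nonlinear variants, whereas your reduction would have to be re-examined in each of those settings (though it does appear to adapt, e.g.\ to Theorem~\ref{teo2} via $v^{q/p}\le v$).
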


\begin{proof}
Since $a(t_1,t_2)$ is nondecreasing on
$(t_1,t_2)\in\tilde{\mathbb{T}}_1\times\tilde{\mathbb{T}}_2$,
inequality (\ref{in0}) implies, for an arbitrary
$\varepsilon>0$, that
$$r(t_1,t_2)\leq 1+\int_{a_1}^{t_1}\int_{a_2}^{t_2}
f(s_1,s_2)r(s_1,s_2)\Delta_1 s_1\Delta_2 s_2,$$ where
$r(t_1,t_2)=\frac{u(t_1,t_2)}{a(t_1,t_2)+\varepsilon}$. Define
$v(t_1,t_2)$ by the right hand side of the last inequality. Then,
\begin{equation}
\label{in1} \frac{\partial}{\Delta_2 t_2}\left(\frac{\partial
v(t_1,t_2)}{\Delta_1 t_1}\right)=f(t_1,t_2)r(t_1,t_2)\leq
f(t_1,t_2)v(t_1,t_2),\
(t_1,t_2)\in\tilde{\mathbb{T}}_1^k\times\tilde{\mathbb{T}}_2^k.
\end{equation}
From (\ref{in1}), and taking into account that $v(t_1,t_2)$ is
positive and nondecreasing, we obtain
$$\frac{v(t_1,t_2)\frac{\partial}{\Delta_2 t_2}\left(\frac{\partial
v(t_1,t_2)}{\Delta_1
t_1}\right)}{v(t_1,t_2)v(t_1,\sigma_2(t_2))}\leq f(t_1,t_2),$$
from which it follows that
$$\frac{v(t_1,t_2)\frac{\partial}{\Delta_2 t_2}\left(\frac{\partial
v(t_1,t_2)}{\Delta_1
t_1}\right)}{v(t_1,t_2)v(t_1,\sigma_2(t_2))}\leq
f(t_1,t_2)+\frac{\frac{\partial v(t_1,t_2)}{\Delta_1
t_1}\frac{\partial v(t_1,t_2)}{\Delta_2
t_2}}{v(t_1,t_2)v(t_1,\sigma_2(t_2))}.$$ The previous inequality
can be rewritten as
$$\frac{\partial}{\Delta_2 t_2}\left(\frac{\frac{\partial v(t_1,t_2)}{\Delta_1 t_1}}{v(t_1,t_2)}\right)\leq f(t_1,t_2).$$
Delta integrating with respect to the second variable from $a_2$
to $t_2$ (we observe that $t_2$ can be the maximal element of
$\tilde{\mathbb{T}}_2$, if it exists), and noting that
$\frac{\partial v(t_1,t_2)}{\Delta_1 t_1}\mid_{(t_1,a_2)}=0$, we
have
$$\frac{\frac{\partial v(t_1,t_2)}{\Delta_1 t_1}}{v(t_1,t_2)}\leq\int_{a_2}^{t_2}f(t_1,s_2)\Delta_2 s_2,$$
that is,
$$\frac{\partial v(t_1,t_2)}{\Delta_1 t_1}\leq\int_{a_2}^{t_2}f(t_1,s_2)\Delta_2 s_2 v(t_1,t_2).$$
Fixing $t_2\in\tilde{\mathbb{T}}_2$ arbitrarily, we have that
$p(t_1):=\int_{a_2}^{t_2}f(t_1,s_2)\Delta_2 s_2\in \mathcal{R}^+$. Because $v(a_1,t_2)=1$,
by \cite[Theorem~5.4]{inesurvey}
$v(t_1,t_2)\leq e_p(t_1,a_1)$.
Inequality (\ref{in2}) follows from
$$u(t_1,t_2)\leq [a(t_1,t_2)+\varepsilon]v(t_1,t_2)$$
and the arbitrariness of $\varepsilon$.
$\>\Box$ \end{proof}

\begin{corollary}[\textrm{cf.} Lemma~2.1 of \cite{khellaf}]
\label{cor1} Let $\mathbb{T}_1=\mathbb{T}_2=\mathbb{R}$ and assume
that the functions $u(x,y), a(x,y), f(x,y)\in
C([x_0,\infty)\times[y_0,\infty),\mathbb{R}_0^+)$ with $a(x,y)$
nondecreasing in its variables. If
\begin{equation*}
u(x,y)\leq a(x,y)+\int_{x_0}^{x}\int_{y_0}^{y} f(t,s)u(t,s)dt ds
\end{equation*}
for $(x,y)\in [x_0,\infty)\times[y_0,\infty)$, then
\begin{equation*}
u(x,y)\leq a(x,y)\exp\left(\int_{x_0}^{x}\int_{y_0}^y f(t,s)dt
ds\right)
\end{equation*}
for $(x,y)\in [x_0,\infty)\times[y_0,\infty)$.
\end{corollary}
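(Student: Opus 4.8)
The plan is to obtain the corollary as a direct specialization of Theorem~\ref{teo1} to the time scale $\mathbb{T}_1=\mathbb{T}_2=\mathbb{R}$. First I would set $a_1=x_0$ and $a_2=y_0$ and rename the variables by $(t_1,t_2)\mapsto(x,y)$ and $(s_1,s_2)\mapsto(t,s)$. On $\mathbb{R}$ every point is dense, the graininess vanishes, and both the delta derivative and the delta integral reduce to their classical counterparts; consequently the hypothesis of Theorem~\ref{teo1} coincides verbatim with the integral inequality assumed here, while the continuity and monotonicity assumptions carry over unchanged. Note also that regressivity is automatic on $\mathbb{R}$, so the condition $p\in\mathcal{R}^+$ used inside the proof of Theorem~\ref{teo1} is satisfied for free.

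The only genuinely nontrivial step is to evaluate, on $\mathbb{R}$, the time-scale exponential appearing in the conclusion (\ref{in2}). I would invoke the standard identity (see \cite{livro}) that for $\mathbb{T}=\mathbb{R}$ and regressive $p$ one has $e_p(t,s)=\exp\left(\int_s^t p(\tau)\,d\tau\right)$. Applying it with the fixed-$t_1$ function $p(t_1)=\int_{y_0}^y f(t_1,s)\,ds$, the exponential factor in (\ref{in2}) becomes
$$e_p(x,x_0)=\exp\left(\int_{x_0}^x\int_{y_0}^y f(t,s)\,ds\,dt\right).$$

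Finally, since $f$ is continuous and nonnegative, Fubini's theorem permits reordering the iterated integral, so the exponent equals $\int_{x_0}^x\int_{y_0}^y f(t,s)\,dt\,ds$ and (\ref{in2}) reduces precisely to the claimed estimate. I do not anticipate any real obstacle beyond correctly recalling the continuous-time form of $e_p$; the substantive work has already been carried out in Theorem~\ref{teo1}, and this corollary is essentially a clean instantiation of it.
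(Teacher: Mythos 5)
Your proposal is correct and follows essentially the same route as the paper, which states this corollary as an immediate specialization of Theorem~\ref{teo1} to $\mathbb{T}_1=\mathbb{T}_2=\mathbb{R}$; your evaluation $e_p(x,x_0)=\exp\left(\int_{x_0}^{x}p(t)\,dt\right)$ with $p(t)=\int_{y_0}^{y}f(t,s)\,ds$ supplies exactly the routine identification the paper leaves implicit.
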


\begin{corollary}[\textrm{cf.} Theorem~2.1 of \cite{saldisc}]
Let $\mathbb{T}_1=\mathbb{T}_2=\mathbb{Z}$ and assume that the
functions $u(m,n), a(m,n), f(m,n)$ are nonnegative and that
$a(m,n)$ is nondecreasing for $m\in[m_0,\infty)\cap\mathbb{Z}$ and
$n\in[n_0,\infty)\cap\mathbb{Z}$, $m_0,n_0\in\mathbb{Z}$. If
\begin{equation*}
u(m,n)\leq a(m,n)+\sum_{s=m_0}^{m-1}\sum_{t=n_0}^{n-1}
f(s,t)u(s,t)
\end{equation*}
for all $(m,n)\in
[m_0,\infty)\cap\mathbb{Z}\times[n_0,\infty)\cap\mathbb{Z}$, then
\begin{equation*}
u(m,n)\leq
a(m,n)\prod_{s=m_0}^{m-1}\left[1+\sum_{t=n_0}^{n-1}f(s,t)\right]
\end{equation*}
for all $(m,n)\in
[m_0,\infty)\cap\mathbb{Z}\times[n_0,\infty)\cap\mathbb{Z}$.
\end{corollary}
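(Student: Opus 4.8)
The plan is to obtain this corollary as a direct specialization of Theorem~\ref{teo1} to the time scale $\mathbb{T}_1=\mathbb{T}_2=\mathbb{Z}$, for which every delta integral collapses to a finite sum and the exponential $e_p$ collapses to a finite product. First I would set $a_1=m_0$ and $a_2=n_0$ and recall that on $\mathbb{Z}$ one has $\int_a^b g(s)\,\Delta s=\sum_{s=a}^{b-1}g(s)$; applying this twice turns the double delta integral in (\ref{in0}) into $\sum_{s=m_0}^{m-1}\sum_{t=n_0}^{n-1}f(s,t)u(s,t)$, so that the hypothesis of the present corollary is precisely hypothesis (\ref{in0}) of Theorem~\ref{teo1} read on $\mathbb{Z}$.

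Next I would verify that the standing assumptions of Theorem~\ref{teo1} are met here. Since $\mathbb{Z}$ is a discrete time scale, every point is isolated and hence every function on it is (rd-)continuous, so $u,a,f$ lie in the required continuity class; nonnegativity and the monotonicity of $a$ in each variable are given by hypothesis. Therefore Theorem~\ref{teo1} applies and yields
$$u(m,n)\leq a(m,n)\,e_{p}(m,m_0),\qquad p(s):=\sum_{t=n_0}^{n-1}f(s,t),$$
where I have already rewritten the inner integral $\int_{n_0}^{n}f(s,t)\,\Delta_2 t$ as the sum defining $p$.

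It then remains only to evaluate the time-scale exponential on $\mathbb{Z}$. The graininess here is constant, $\mu\equiv 1$, so $p\geq 0$ immediately gives $1+\mu p=1+p\geq 1>0$, i.e. $p\in\mathcal{R}^+$, which is exactly the positive regressivity invoked inside the proof of Theorem~\ref{teo1}. Using the standard product representation $e_p(m,m_0)=\prod_{s=m_0}^{m-1}\bigl(1+\mu(s)p(s)\bigr)=\prod_{s=m_0}^{m-1}\bigl(1+p(s)\bigr)$ and substituting the formula for $p(s)$ produces the claimed estimate
$$u(m,n)\leq a(m,n)\prod_{s=m_0}^{m-1}\left[1+\sum_{t=n_0}^{n-1}f(s,t)\right].$$
There is no genuine analytic obstacle in this argument; the only point requiring care is the bookkeeping of index ranges, so that the half-open summation convention built into $\int\Delta$ on $\mathbb{Z}$ matches the ranges $s=m_0,\dots,m-1$ and $t=n_0,\dots,n-1$ in the statement, together with checking that the empty-product and empty-sum conventions give the correct base values at $m=m_0$ and $n=n_0$.
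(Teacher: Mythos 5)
Your proof is correct and follows exactly the route the paper intends: the corollary is stated without proof as a direct specialization of Theorem~\ref{teo1} to $\mathbb{T}_1=\mathbb{T}_2=\mathbb{Z}$, and your verification (delta integrals become sums over $\{m_0,\dots,m-1\}$ and $\{n_0,\dots,n-1\}$, continuity is automatic on a discrete time scale, and $e_p(m,m_0)=\prod_{s=m_0}^{m-1}(1+p(s))$ since $\mu\equiv 1$) supplies precisely the bookkeeping the paper leaves implicit.
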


\begin{remark}
We note that, following the same steps of the proof of Theorem~\ref{teo1}, one can obtained other bound on the function $u$, namely
\begin{equation}
\label{in6} u(t_1,t_2)\leq
a(t_1,t_2)e_{\int_{a_1}^{t_1}f(s_1,t_2)\Delta_1 s_1}(t_2,a_2).
\end{equation}
When $\mathbb{T}_1=\mathbb{T}_2=\mathbb{R}$, then the bounds in
(\ref{in2}) and (\ref{in6}) coincide (see Corollary~\ref{cor1}).
If, for example, we let
$\mathbb{T}_1=\mathbb{T}_2=\mathbb{Z}$, the bounds obtained can be different. Moreover, at different points one bound can be sharper than the other and vice-versa (see Example~\ref{ex:rm1}).
\end{remark}

\begin{example}
\label{ex:rm1}
Let $f(t_1,t_2)$ be a function defined by $f(0,0)=1/4$,
$f(1,0)=1/5$, $f(2,0)=1$, $f(0,1)=1/2$, $f(1,1)=0$,
and $f(2,1)=5$. Set $a_1=a_2=0$.
Then, from (\ref{in2}) we get
\begin{equation*}
u(2,1)\leq a(2,1)\frac{3}{2},\quad
u(3,2)\leq a(3,2)\frac{147}{10},
\end{equation*}
while from (\ref{in6}) we get
\begin{equation*}
u(2,1)\leq a(2,1)\frac{29}{20},\quad
u(3,2)\leq a(3,2)\frac{637}{40}.
\end{equation*}
\end{example}

Other interesting corollaries can be obtained from Theorem~\ref{teo1}.

\begin{corollary}
Let $\mathbb{T}_1=q^{\mathbb{N}_0}=\{q^k:k\in\mathbb{N}_0\}$, for
some $q>1$, and $\mathbb{T}_2=\mathbb{R}$. Assume that the
functions $u(t,x),\ a(t,x)$ and $f(t,x)$ satisfy the hypothesis of Theorem~\ref{teo1} for all
$(t,x)\in\tilde{\mathbb{T}}_1\times\tilde{\mathbb{T}}_2$ with
$a_1=1$ and $a_2=0$. If
\begin{equation*}
u(t,x)\leq a(t,x)+\sum_{s=1}^{t/q}(q-1)s\int_{0}^{x}
f(s,\tau)u(s,\tau)d\tau
\end{equation*}
for all $(t,x)\in\tilde{\mathbb{T}}_1\times\tilde{\mathbb{T}}_2$,
then
\begin{equation*}
u(t,x)\leq
a(t,x)\prod_{s=1}^{t/q}\left[1+(q-1)s\int_{0}^{x}f(s,\tau)d\tau\right]
\end{equation*}
for all $(t,x)\in\tilde{\mathbb{T}}_1\times\tilde{\mathbb{T}}_2$.
\end{corollary}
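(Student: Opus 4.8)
The plan is to obtain this corollary as a direct specialization of Theorem~\ref{teo1}, so the essential task is to compute the exponential function on time scales appearing in the bound (\ref{in2}) when $\mathbb{T}_1=q^{\mathbb{N}_0}$. First I would record the relevant time scales data: on $q^{\mathbb{N}_0}$ the forward jump operator is $\sigma_1(t)=qt$ and the graininess is $\mu_1(t)=(q-1)t$, whereas on $\mathbb{T}_2=\mathbb{R}$ the delta integral is the ordinary Riemann integral. Under these identifications, and using $a_1=1$, $a_2=0$, the double delta integral $\int_{1}^{t}\int_{0}^{x} f(s,\tau)u(s,\tau)\,\Delta_1 s\,\Delta_2\tau$ becomes the nested sum--integral $\sum_{s=1}^{t/q}(q-1)s\int_{0}^{x} f(s,\tau)u(s,\tau)\,d\tau$, where the index $s$ runs through the points $1,q,\dots,t/q$ of $\tilde{\mathbb{T}}_1$. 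Thus the assumed inequality is precisely (\ref{in0}) specialized to these scales, and the hypotheses of Theorem~\ref{teo1} hold.

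Next I would apply Theorem~\ref{teo1} to obtain (\ref{in2}), in which the exponent is $p(s)=\int_{0}^{x} f(s,\tau)\,d\tau$. Since $f$ is nonnegative and $\mu_1(s)=(q-1)s>0$, we have $1+\mu_1(s)p(s)\geq 1>0$, so $p\in\mathcal{R}^+$ and the exponential is well defined. The key computation is then to evaluate $e_{p}(t,1)$ on $q^{\mathbb{N}_0}$. Because every point of $q^{\mathbb{N}_0}$ is isolated, the exponential admits the product representation $e_{p}(t,1)=\prod_{s\in[1,t)\cap\mathbb{T}_1}\bigl(1+\mu_1(s)p(s)\bigr)$, a standard fact recorded in \cite{livro}. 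Substituting $\mu_1(s)=(q-1)s$ and $p(s)=\int_{0}^{x} f(s,\tau)\,d\tau$ gives $e_{p}(t,1)=\prod_{s=1}^{t/q}\bigl[1+(q-1)s\int_{0}^{x} f(s,\tau)\,d\tau\bigr]$, which is exactly the factor multiplying $a(t,x)$ in the claimed conclusion.

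There is no substantial obstacle here: the statement is a computation rather than a fresh argument. The one point demanding care is the bookkeeping of the product over the isolated scale, namely verifying that $[1,t)\cap q^{\mathbb{N}_0}=\{1,q,\dots,t/q\}$, so that the upper index of the product is $t/q$ rather than $t$ and the graininess is read off correctly at each node. Combining this product form of the exponential with the bound supplied by Theorem~\ref{teo1} then yields the asserted inequality for all $(t,x)\in\tilde{\mathbb{T}}_1\times\tilde{\mathbb{T}}_2$.
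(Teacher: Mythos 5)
Your proposal is correct and is exactly the argument the paper intends: the corollary is stated without proof as a direct specialization of Theorem~\ref{teo1}, and your computation of the graininess $\mu_1(s)=(q-1)s$, the delta integrals, and the product form of the exponential on the isolated scale (consistent with the paper's formula (\ref{e1})) is the right bookkeeping. Nothing is missing.
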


We now generalize Theorem~\ref{teo1}. If in Theorem~\ref{thm:abv} we let $f\equiv 1$ and $g$ not depending on the first two variables, then we obtain
Theorem~\ref{teo1}.

\begin{theorem}
\label{thm:abv}
Let $u(t_1,t_2), a(t_1,t_2), f(t_1,t_2)\in
C(\tilde{\mathbb{T}}_1\times\tilde{\mathbb{T}}_2,\mathbb{R}_0^+)$,
with $a$ and $f$ nondecreasing in each of the variables and
$g(t_1,t_2,s_1,s_2)\in C(S,\mathbb{R}_0^+)$, where
$S=\{(t_1,t_2,s_1,s_2)\in\tilde{\mathbb{T}}_1\times
\tilde{\mathbb{T}}_2\times\tilde{\mathbb{T}}_1\times\tilde{\mathbb{T}}_2:a_1\leq
s_1\leq t_1,a_2\leq s_2\leq t_2\}$. If
\begin{equation*}
u(t_1,t_2)\leq
a(t_1,t_2)+f(t_1,t_2)\int_{a_1}^{t_1}\int_{a_2}^{t_2}
g(t_1,t_2,s_1,s_2)u(s_1,s_2)\Delta_1 s_1\Delta_2 s_2
\end{equation*}
for $(t_1,t_2)\in \tilde{\mathbb{T}}_1\times\tilde{\mathbb{T}}_2$,
then
\begin{equation}
\label{in5} u(t_1,t_2)\leq
a(t_1,t_2)e_{\int_{a_2}^{t_2}f(t_1,t_2)g(t_1,t_2,t_1,s_2)\Delta_2
s_2}(t_1,a_1)\, , \quad (t_1,t_2)\in
\tilde{\mathbb{T}}_1\times\tilde{\mathbb{T}}_2.
\end{equation}
\end{theorem}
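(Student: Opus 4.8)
The plan is to mimic the structure of the proof of Theorem~\ref{teo1}, but now carefully tracking the roles played by the three multipliers $a$, $f$, and $g$. The first reduction is to normalize. Fix an arbitrary $\varepsilon>0$ and divide the hypothesis by $a(t_1,t_2)+\varepsilon$; using that $a$ is nondecreasing (so that $a(s_1,s_2)+\varepsilon\leq a(t_1,t_2)+\varepsilon$ inside the double integral) I would obtain, writing $r(t_1,t_2)=u(t_1,t_2)/(a(t_1,t_2)+\varepsilon)$,
\begin{equation*}
r(t_1,t_2)\leq 1+f(t_1,t_2)\int_{a_1}^{t_1}\int_{a_2}^{t_2} g(t_1,t_2,s_1,s_2)r(s_1,s_2)\Delta_1 s_1\Delta_2 s_2.
\end{equation*}
The obstacle here, which is absent in Theorem~\ref{teo1}, is that $f$ and $g$ depend on the \emph{outer} variables $(t_1,t_2)$, so the right-hand side is not simply a function whose mixed delta derivative returns the integrand. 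I would handle this exactly as the statement's remark suggests it should be handled: momentarily freeze $f(t_1,t_2)$ and the outer-variable slots of $g$ at their values at the upper limit, defining
\begin{equation*}
v(t_1,t_2)=1+f(t_1,t_2)\int_{a_1}^{t_1}\int_{a_2}^{t_2} g(t_1,t_2,s_1,s_2)r(s_1,s_2)\Delta_1 s_1\Delta_2 s_2,
\end{equation*}
and treat the prefactors $f(t_1,t_2)$ and $g(t_1,t_2,t_1,s_2)$ as the effective kernel when differentiating. Because $f$ and $g$ are nondecreasing (respectively continuous and nonnegative), $v$ is positive, nondecreasing in each variable, and $r\leq v$.

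Next I would compute the mixed partial delta derivative. Differentiating $v$ first in $t_1$ and then in $t_2$, and bounding $r\leq v$, I expect to arrive at an inequality of the form
\begin{equation*}
\frac{\partial}{\Delta_2 t_2}\left(\frac{\partial v(t_1,t_2)}{\Delta_1 t_1}\right)\leq f(t_1,t_2)\,g(t_1,t_2,t_1,t_2)\,v(t_1,t_2),
\end{equation*}
which is precisely the analogue of (\ref{in1}) with effective coefficient $f(t_1,t_2)g(t_1,t_2,t_1,t_2)$ in place of $f(t_1,t_2)$. The delicate point is justifying that the delta differentiation of the double integral really produces this coefficient: the inner integral $\int_{a_2}^{t_2}$ contributes the $s_2$-slot evaluated at $t_2$, and the $t_1$-delta derivative of the outer integral $\int_{a_1}^{t_1}$ contributes the $s_1$-slot evaluated at $t_1$, while the $(t_1,t_2)$-dependence of $f$ and of the first two slots of $g$ is frozen along the way. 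This is where most of the care is needed, and it is the main obstacle.

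From this mixed-derivative inequality the argument is identical to that of Theorem~\ref{teo1}: I would divide by $v(t_1,t_2)v(t_1,\sigma_2(t_2))$, add the nonnegative term involving the product of first-order delta derivatives to obtain an exact product-rule form, and recognize the left-hand side as $\frac{\partial}{\Delta_2 t_2}\bigl(\frac{\partial v/\Delta_1 t_1}{v}\bigr)$. Delta integrating in $t_2$ from $a_2$ to $t_2$, and using $\frac{\partial v}{\Delta_1 t_1}\big|_{(t_1,a_2)}=0$, yields
\begin{equation*}
\frac{\partial v(t_1,t_2)}{\Delta_1 t_1}\leq v(t_1,t_2)\int_{a_2}^{t_2} f(t_1,t_2)g(t_1,t_2,t_1,s_2)\,\Delta_2 s_2.
\end{equation*}
Fixing $t_2$, the coefficient $p(t_1):=\int_{a_2}^{t_2} f(t_1,t_2)g(t_1,t_2,t_1,s_2)\Delta_2 s_2$ is nonnegative and hence positively regressive, so by \cite[Theorem~5.4]{inesurvey} together with $v(a_1,t_2)=1$ I conclude $v(t_1,t_2)\leq e_p(t_1,a_1)$. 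Finally, $u\leq [a+\varepsilon]v$ and letting $\varepsilon\to 0$ gives (\ref{in5}). Setting $f\equiv 1$ and $g$ independent of $(t_1,t_2)$ recovers Theorem~\ref{teo1}, as claimed.
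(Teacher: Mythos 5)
Your proposal correctly identifies the main obstacle --- the dependence of $f$ and of the first two slots of $g$ on the outer variables $(t_1,t_2)$ --- but the way you dispose of it is not a valid argument, and this is precisely the point where the proof of Theorem~\ref{thm:abv} has to differ from that of Theorem~\ref{teo1}. You define
\begin{equation*}
v(t_1,t_2)=1+f(t_1,t_2)\int_{a_1}^{t_1}\int_{a_2}^{t_2}g(t_1,t_2,s_1,s_2)r(s_1,s_2)\Delta_1 s_1\Delta_2 s_2
\end{equation*}
and then propose to ``treat the prefactors as the effective kernel'' when computing $\frac{\partial}{\Delta_2 t_2}\left(\frac{\partial v}{\Delta_1 t_1}\right)$, i.e.\ to differentiate as if $f(t_1,t_2)$ and $g(t_1,t_2,\cdot,\cdot)$ were constants. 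They are not: by the product rule and the time-scale Leibniz rule, $\frac{\partial v}{\Delta_1 t_1}$ contains, besides the desired term, a term $\frac{\partial f}{\Delta_1 t_1}$ multiplying the double integral and an integral of $\frac{\partial g}{\Delta_1 t_1}$, and these extra terms enter with a plus sign, so they cannot be discarded when what you need is an \emph{upper} bound of the form $f\,g\,v$; nondecreasingness of $f$ makes them nonnegative, which works against you, not for you. Worse, $f$ and $g$ are only assumed continuous, so these delta derivatives need not even exist. Hence the displayed mixed-derivative inequality in your proposal is unjustified and in general false for the $v$ you defined; ``freezing along the way'' is not an operation one can perform inside a differentiation.

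The paper resolves this by freezing \emph{before} defining $v$, not during the computation. Fix $t_1^\ast\in\tilde{\mathbb{T}}_1$, $t_2^\ast\in\tilde{\mathbb{T}}_2$ and $\varepsilon>0$ arbitrarily, and set, on the box $[a_1,t_1^\ast]\cap\tilde{\mathbb{T}}_1\times[a_2,t_2^\ast]\cap\tilde{\mathbb{T}}_2$,
\begin{equation*}
v(t_1,t_2)=a(t_1^\ast,t_2^\ast)+\varepsilon+f(t_1^\ast,t_2^\ast)\int_{a_1}^{t_1}\int_{a_2}^{t_2}
g(t_1^\ast,t_2^\ast,s_1,s_2)u(s_1,s_2)\Delta_1 s_1\Delta_2 s_2 .
\end{equation*}
Now $f(t_1^\ast,t_2^\ast)$ and $g(t_1^\ast,t_2^\ast,\cdot,\cdot)$ are genuine constants with respect to $(t_1,t_2)$. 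The monotonicity of $a$ and $f$ is spent exactly here, to guarantee $u\leq v$ on the box (not to justify any differentiation), and then the Theorem~\ref{teo1} computation goes through verbatim because the kernel depends only on the integration variables, yielding
$\frac{\partial v(t_1,t_2)}{\Delta_1 t_1}\leq\int_{a_2}^{t_2}f(t_1^\ast,t_2^\ast)g(t_1^\ast,t_2^\ast,t_1,s_2)\Delta_2 s_2\, v(t_1,t_2)$.
Fixing $t_2=t_2^\ast$ and applying \cite[Theorem~5.4]{inesurvey} with initial value $v(a_1,t_2^\ast)=a(t_1^\ast,t_2^\ast)+\varepsilon$ gives $v(t_1,t_2^\ast)\leq\left(a(t_1^\ast,t_2^\ast)+\varepsilon\right)e_p(t_1,a_1)$; evaluating at $t_1=t_1^\ast$ and using the arbitrariness of $t_1^\ast$, $t_2^\ast$ and $\varepsilon$ yields (\ref{in5}). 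Your preliminary normalization by $a+\varepsilon$ is harmless, but without this two-point freezing device the central step of your argument has a genuine gap.
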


\begin{proof}
We start by fixing arbitrary numbers
$t_1^\ast\in\tilde{\mathbb{T}}_1$ and
$t_2^\ast\in\tilde{\mathbb{T}}_2$, and considering the following function defined on
$[a_1,t_1^\ast]\cap\tilde{\mathbb{T}}_1\times[a_2,t_2^\ast]\cap\tilde{\mathbb{T}}_2$
for an arbitrary $\varepsilon>0$:
\begin{equation*}
v(t_1,t_2)=a(t_1^\ast,t_2^\ast)+\varepsilon+f(t_1^\ast,t_2^\ast)\int_{a_1}^{t_1}\int_{a_2}^{t_2}
g(t_1^\ast,t_2^\ast,s_1,s_2)u(s_1,s_2)\Delta_1 s_1\Delta_2 s_2 \, .
\end{equation*}
From our hypothesis we see that
\begin{equation*}
u(t_1,t_2)\leq v(t_1,t_2),\ \mbox{for all}\ (t_1,t_2)\in
[a_1,t_1^\ast]\cap\tilde{\mathbb{T}}_1\times[a_2,t_2^\ast]\cap\tilde{\mathbb{T}}_2.
\end{equation*}
Moreover, delta differentiating with respect to the first variable
and then with respect to the second, we obtain
\begin{align*}
\frac{\partial}{\Delta_2 t_2}\left(\frac{\partial
v(t_1,t_2)}{\Delta_1
t_1}\right)&=f(t_1^\ast,t_2^\ast)g(t_1^\ast,t_2^\ast,t_1,t_2)u(t_1,t_2)\\
&\leq
f(t_1^\ast,t_2^\ast)g(t_1^\ast,t_2^\ast,t_1,t_2)v(t_1,t_2),
\end{align*}
for all $(t_1,t_2)\in
[a_1,t_1^\ast]^k\cap\tilde{\mathbb{T}}_1\times[a_2,t_2^\ast]^k\cap\tilde{\mathbb{T}}_2$.
From this last inequality, we can write
$$\frac{v(t_1,t_2)\frac{\partial}{\Delta_2 t_2}\left(\frac{\partial
v(t_1,t_2)}{\Delta_1
t_1}\right)}{v(t_1,t_2)v(t_1,\sigma_2(t_2))}\leq
f(t_1^\ast,t_2^\ast)g(t_1^\ast,t_2^\ast,t_1,t_2) \, .$$
Hence,
$$\frac{v(t_1,t_2)\frac{\partial}{\Delta_2 t_2}\left(\frac{\partial
v(t_1,t_2)}{\Delta_1
t_1}\right)}{v(t_1,t_2)v(t_1,\sigma_2(t_2))}\leq
f(t_1^\ast,t_2^\ast)g(t_1^\ast,t_2^\ast,t_1,t_2)+\frac{\frac{\partial
v(t_1,t_2)}{\Delta_1 t_1}\frac{\partial v(t_1,t_2)}{\Delta_2
t_2}}{v(t_1,t_2)v(t_1,\sigma_2(t_2))}.$$ The previous inequality
can be rewritten as
$$\frac{\partial}{\Delta_2 t_2}\left(\frac{\frac{\partial v(t_1,t_2)}{\Delta_1 t_1}}{v(t_1,t_2)}\right)\leq f(t_1^\ast,t_2^\ast)g(t_1^\ast,t_2^\ast,t_1,t_2).$$
Delta integrating with respect to the second variable from $a_2$
to $t_2$ and noting that $\frac{\partial v(t_1,t_2)}{\Delta_1
t_1}\mid_{(t_1,a_2)}=0$, we have
$$\frac{\frac{\partial v(t_1,t_2)}{\Delta_1 t_1}}{v(t_1,t_2)}\leq\int_{a_2}^{t_2}f(t_1^\ast,t_2^\ast)g(t_1^\ast,t_2^\ast,t_1,s_2)\Delta_2 s_2,$$
that is,
$$\frac{\partial v(t_1,t_2)}{\Delta_1 t_1}\leq\int_{a_2}^{t_2}f(t_1^\ast,t_2^\ast)g(t_1^\ast,t_2^\ast,t_1,s_2)\Delta_2 s_2 v(t_1,t_2).$$
Fix $t_2=t_2^\ast$ and put
$p(t_1):=\int_{a_2}^{t_2^\ast}f(t_1^\ast,t_2^\ast)g(t_1^\ast,t_2^\ast,t_1,s_2)\Delta_2
s_2\in \mathcal{R}^+$. By \cite[Theorem~5.4]{inesurvey}
$$v(t_1,t_2^\ast)\leq(a(t_1^\ast,t_2^\ast)+\varepsilon) e_p(t_1,a_1).$$
Letting $t_1=t_1^\ast$ in the above inequality, and remembering that $t_1^\ast$, $t_2^\ast$ and $\varepsilon$ are arbitrary, it follows (\ref{in5}).
$\>\Box$ \end{proof}


\section{\normalsize Nonlinear inequalities}

\begin{theorem}
\label{teo2} Let $u(t_1,t_2)$ and $f(t_1,t_2)$ $\in
C(\tilde{\mathbb{T}}_1\times\tilde{\mathbb{T}}_2,\mathbb{R}_0^+)$.
Moreover, let $a(t_1,t_2)\in
C(\tilde{\mathbb{T}}_1\times\tilde{\mathbb{T}}_2,\mathbb{R}^+)$
be a nondecreasing function in each of the variables. If $p$
and $q$ are two positive real numbers such that $p\geq q$ and if
\begin{equation}
\label{in3} u^p(t_1,t_2)\leq
a(t_1,t_2)+\int_{a_1}^{t_1}\int_{a_2}^{t_2}
f(s_1,s_2)u^q(s_1,s_2)\Delta_1 s_1\Delta_2 s_2
\end{equation}
for $(t_1,t_2)\in \tilde{\mathbb{T}}_1\times\tilde{\mathbb{T}}_2$,
then
\begin{equation}
\label{in4} u(t_1,t_2)\leq
a^{\frac{1}{p}}(t_1,t_2)\left[e_{\int_{a_2}^{t_2}f(t_1,s_2)a^{\frac{q}{p}-1}(t_1,s_2)\Delta_2
s_2}(t_1,a_1)\right]^{\frac{1}{p}},\ (t_1,t_2)\in
\tilde{\mathbb{T}}_1\times\tilde{\mathbb{T}}_2.
\end{equation}
\end{theorem}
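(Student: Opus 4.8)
The plan is to reduce the nonlinear inequality (\ref{in3}) to the linear situation already settled in Theorem~\ref{teo1}, the key being that the exponent $q/p$ lies in $(0,1]$. Since here $a$ takes values in $\mathbb{R}^+$ rather than $\mathbb{R}_0^+$, I can divide by $a$ directly and need no auxiliary $\varepsilon$. Setting $r(t_1,t_2)=u^p(t_1,t_2)/a(t_1,t_2)$, so that $u^p=ra$ and hence $u^q=(u^p)^{q/p}=r^{q/p}a^{q/p}$, I would divide (\ref{in3}) by $a(t_1,t_2)$ and use that $a$ is nondecreasing to replace $1/a(t_1,t_2)$ by the larger $1/a(s_1,s_2)$ under the integral sign. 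This gives
\begin{equation*}
r(t_1,t_2)\leq 1+\int_{a_1}^{t_1}\int_{a_2}^{t_2} f(s_1,s_2)a^{\frac{q}{p}-1}(s_1,s_2)\,r^{\frac{q}{p}}(s_1,s_2)\,\Delta_1 s_1\Delta_2 s_2,
\end{equation*}
which is a Gronwall-type inequality for $r$ with effective coefficient $f a^{\frac{q}{p}-1}$, spoiled only by the still-nonlinear factor $r^{q/p}$.

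Next I would define $v(t_1,t_2)$ to be the right-hand side above. Then $v\geq 1$, $v$ is nondecreasing in each variable, $v$ is rd-continuous, and $r\leq v$. The crucial observation is that, because $v\geq 1$ and $0<q/p\leq 1$, one has $r^{q/p}\leq v^{q/p}\leq v$; substituting this into the defining integral for $v$ yields the genuinely linear inequality
\begin{equation*}
v(t_1,t_2)\leq 1+\int_{a_1}^{t_1}\int_{a_2}^{t_2} f(s_1,s_2)a^{\frac{q}{p}-1}(s_1,s_2)\,v(s_1,s_2)\,\Delta_1 s_1\Delta_2 s_2.
\end{equation*}

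Now I would apply Theorem~\ref{teo1} to $v$, taking the constant (hence nondecreasing) function $1$ in the role of $a$ and $f a^{\frac{q}{p}-1}$ in the role of $f$; note that $f a^{\frac{q}{p}-1}\in C(\tilde{\mathbb{T}}_1\times\tilde{\mathbb{T}}_2,\mathbb{R}_0^+)$ precisely because $a>0$. This gives
\begin{equation*}
v(t_1,t_2)\leq e_{\int_{a_2}^{t_2}f(t_1,s_2)a^{\frac{q}{p}-1}(t_1,s_2)\Delta_2 s_2}(t_1,a_1).
\end{equation*}
Combining with $r\leq v$ and $u^p=ra$ produces $u^p(t_1,t_2)\leq a(t_1,t_2)\,e_{\int_{a_2}^{t_2}f(t_1,s_2)a^{\frac{q}{p}-1}(t_1,s_2)\Delta_2 s_2}(t_1,a_1)$, and taking $p$-th roots yields exactly (\ref{in4}). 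Alternatively, one can avoid invoking Theorem~\ref{teo1} and simply rerun its proof verbatim for $v$: form $\frac{\partial}{\Delta_2 t_2}\left(\frac{\frac{\partial v}{\Delta_1 t_1}}{v}\right)\leq f a^{\frac{q}{p}-1}$, integrate from $a_2$ to $t_2$ using $\frac{\partial v}{\Delta_1 t_1}\mid_{(t_1,a_2)}=0$, and close with the regressivity of $p(t_1):=\int_{a_2}^{t_2}f(t_1,s_2)a^{\frac{q}{p}-1}(t_1,s_2)\Delta_2 s_2\in\mathcal{R}^+$ and the bound of \cite[Theorem~5.4]{inesurvey}.

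The routine parts are the differentiation and integration identities for the iterated delta integral, which are identical to those in Theorem~\ref{teo1}. The one genuinely new step — and the place where the hypothesis $p\geq q$ is consumed — is the passage $r^{q/p}\leq v$: this is what converts the nonlinear bound into a linear one, and the main thing to watch is the bookkeeping of the exponents $q/p$ and $q/p-1$, so that the effective coefficient $f a^{\frac{q}{p}-1}$ emerges cleanly and matches the subscript of the exponential in (\ref{in4}).
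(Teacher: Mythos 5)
Your proof is correct and follows essentially the same route as the paper: your $r$ and $v$ are exactly the paper's auxiliary objects (your integrand $f\,a^{\frac{q}{p}-1}r^{\frac{q}{p}}$ is identically the paper's $f\,u^q/a$), and the hypothesis $p\geq q$ is consumed in the same place, via $v^{\frac{q}{p}}\leq v$ because $v\geq 1$. The only difference is cosmetic: you convert to an integral inequality for $v$ and invoke Theorem~\ref{teo1} as a black box with $a\equiv 1$ and coefficient $f\,a^{\frac{q}{p}-1}$, whereas the paper keeps the pointwise inequality $\frac{\partial}{\Delta_2 t_2}\left(\frac{\partial v}{\Delta_1 t_1}\right)\leq f\,a^{\frac{q}{p}-1}v$ and reruns the proof of Theorem~\ref{teo1} --- which is exactly the alternative you mention at the end.
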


\begin{proof}
Since $a(t_1,t_2)$ is positive and nondecreasing on
$(t_1,t_2)\in\tilde{\mathbb{T}}_1\times\tilde{\mathbb{T}}_2$,
inequality (\ref{in3}) implies that
$$u^p(t_1,t_2)\leq a(t_1,t_2)\left(1+\int_{a_1}^{t_1}\int_{a_2}^{t_2}
f(s_1,s_2)\frac{u^q(s_1,s_2)}{a(s_1,s_2)}\Delta_1 s_1\Delta_2
s_2\right).$$  Define $v(t_1,t_2)$ on
$\tilde{\mathbb{T}}_1\times\tilde{\mathbb{T}}_2$ by

$$v(t_1,t_2)=1+\int_{a_1}^{t_1}\int_{a_2}^{t_2}
f(s_1,s_2)\frac{u^q(s_1,s_2)}{a(s_1,s_2)}\Delta_1 s_1\Delta_2
s_2.$$ Then,
\begin{equation*}
\frac{\partial}{\Delta_2 t_2}\left(\frac{\partial
v(t_1,t_2)}{\Delta_1
t_1}\right)=f(t_1,t_2)\frac{u^q(t_1,t_2)}{a(t_1,t_2)}\leq
f(t_1,t_2)a^{\frac{q}{p}-1}(t_1,t_2)v^{\frac{q}{p}}(t_1,t_2) \, ,
\end{equation*}
and noting that $v^{\frac{q}{p}}(t_1,t_2)\leq v(t_1,t_2)$ we conclude that
$$\frac{\partial}{\Delta_2 t_2}\left(\frac{\partial
v(t_1,t_2)}{\Delta_1 t_1}\right)\leq
f(t_1,t_2)a^{\frac{q}{p}-1}(t_1,t_2)v(t_1,t_2).$$ We
can now follow the same procedure as in the proof of Theorem~\ref{teo1} to obtain
$$v(t_1,t_2)\leq e_p(t_1,a_1),$$
where
$p(t_1)=\int_{a_2}^{t_2}f(t_1,s_2)a^{\frac{q}{p}-1}(t_1,s_2)\Delta_2
s_2$. Noting that $$u(t_1,t_2)\leq
a^{\frac{1}{p}}(t_1,t_2)v^{\frac{1}{p}}(t_1,t_2),$$ we obtain the
desired inequality (\ref{in4}).
$\>\Box$ \end{proof}

\begin{theorem}
Let $u(t_1,t_2), a(t_1,t_2), f(t_1,t_2)\in
C(\tilde{\mathbb{T}}_1\times\tilde{\mathbb{T}}_2,\mathbb{R}_0^+)$,
with $a$ and $f$ nondecreasing in each of the variables and
$g(t_1,t_2,s_1,s_2)\in C(S,\mathbb{R}_0^+)$, where
$S=\{(t_1,t_2,s_1,s_2)\in\tilde{\mathbb{T}}_1
\times\tilde{\mathbb{T}}_2\times\tilde{\mathbb{T}}_1
\times\tilde{\mathbb{T}}_2:a_1\leq
s_1\leq t_1,a_2\leq s_2\leq t_2\}$. If $p$ and $q$ are two
positive real numbers such that $p\geq q$ and if
\begin{equation}
\label{eq:6:sec}
u^p(t_1,t_2)\leq
a(t_1,t_2)+f(t_1,t_2)\int_{a_1}^{t_1}\int_{a_2}^{t_2}
g(t_1,t_2,s_1,s_2)u^q(s_1,s_2)\Delta_1 s_1\Delta_2 s_2
\end{equation}
for all $(t_1,t_2)\in
\tilde{\mathbb{T}}_1\times\tilde{\mathbb{T}}_2$, then
\begin{equation*}
u(t_1,t_2)\leq
a^{\frac{1}{p}}(t_1,t_2)\left[e_{\int_{a_2}^{t_2}f(t_1,t_2)
a^{\frac{q}{p}-1}(t_1,s_2)g(t_1,t_2,t_1,s_2)\Delta_2
s_2}(t_1,a_1)\right]^{\frac{1}{p}}
\end{equation*}
for all $(t_1,t_2)\in
\tilde{\mathbb{T}}_1\times\tilde{\mathbb{T}}_2$.
\end{theorem}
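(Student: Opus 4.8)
The plan is to combine the two techniques already developed in the paper: the normalization trick from Theorem~\ref{teo2} (which handles the nonlinear power $u^q$ versus $a$) with the fixing-point device from Theorem~\ref{thm:abv} (which handles the fact that $f$ and $g$ depend on the outer variables $(t_1,t_2)$ rather than only the integration variables). First I would fix arbitrary $t_1^\ast\in\tilde{\mathbb{T}}_1$ and $t_2^\ast\in\tilde{\mathbb{T}}_2$, and since $a$ is positive and nondecreasing, rewrite the hypothesis \eqref{eq:6:sec} evaluated with the frozen outer arguments as
\begin{equation*}
u^p(t_1,t_2)\leq a(t_1^\ast,t_2^\ast)\left(1+f(t_1^\ast,t_2^\ast)\int_{a_1}^{t_1}\int_{a_2}^{t_2} g(t_1^\ast,t_2^\ast,s_1,s_2)\frac{u^q(s_1,s_2)}{a(s_1,s_2)}\Delta_1 s_1\Delta_2 s_2\right)
\end{equation*}
for $(t_1,t_2)$ in the rectangle $[a_1,t_1^\ast]\cap\tilde{\mathbb{T}}_1\times[a_2,t_2^\ast]\cap\tilde{\mathbb{T}}_2$, dividing the kernel by $a(s_1,s_2)$ so that the nonlinear iterate can be controlled.

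Next I would define $v(t_1,t_2)$ to be the quantity inside the parentheses (the $1+\dots$ expression), so that $u^p\le a(t_1^\ast,t_2^\ast)\,v$ on the rectangle and hence $u^q\le a^{q/p}(t_1^\ast,t_2^\ast)\,v^{q/p}$. Delta differentiating $v$ in $t_1$ and then $t_2$ gives
\begin{equation*}
\frac{\partial}{\Delta_2 t_2}\left(\frac{\partial v(t_1,t_2)}{\Delta_1 t_1}\right)=f(t_1^\ast,t_2^\ast)g(t_1^\ast,t_2^\ast,t_1,t_2)\frac{u^q(t_1,t_2)}{a(t_1,t_2)}\leq f(t_1^\ast,t_2^\ast)g(t_1^\ast,t_2^\ast,t_1,t_2)a^{\frac{q}{p}-1}(t_1,t_2)\,v(t_1,t_2),
\end{equation*}
where in the last step I use $v^{q/p}\le v$ (valid because $v\ge 1$ and $q/p\le 1$) together with the bound $u^q/a\le a^{q/p-1}v^{q/p}$. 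At this point the structure is identical to the inequality reached partway through the proof of Theorem~\ref{thm:abv}, with the coefficient $f(t_1^\ast,t_2^\ast)g(t_1^\ast,t_2^\ast,t_1,t_2)$ simply augmented by the factor $a^{q/p-1}(t_1,t_2)$.

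I would then repeat the algebraic manipulation verbatim from the earlier proofs: multiply through by $v/(v\,v^{\sigma_2})$, add the nonnegative cross term $\tfrac{\partial_1 v\,\partial_2 v}{v\,v^{\sigma_2}}$ to the right, recognize the left side as $\frac{\partial}{\Delta_2 t_2}\!\left(\frac{\partial_1 v}{v}\right)$, delta integrate in $t_2$ from $a_2$ to $t_2$ using $\partial_1 v\mid_{(t_1,a_2)}=0$, and arrive at $\partial_1 v/v\le \int_{a_2}^{t_2} f(t_1^\ast,t_2^\ast)a^{q/p-1}(t_1,s_2)g(t_1^\ast,t_2^\ast,t_1,s_2)\Delta_2 s_2$. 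Fixing $t_2=t_2^\ast$ and setting $p(t_1)$ equal to this integral evaluated there, the one-variable dynamic inequality in \cite[Theorem~5.4]{inesurvey} gives $v(t_1,t_2^\ast)\le e_p(t_1,a_1)$ since $v(a_1,t_2^\ast)=1$. Finally I set $t_1=t_1^\ast$, recall $u^p\le a(t_1^\ast,t_2^\ast)v$, take $p$-th roots, and let $\varepsilon\to 0$ (if an $\varepsilon$ cushion is introduced to keep $v$ strictly positive as in the earlier proofs) while invoking the arbitrariness of the starred points to replace them by $(t_1,t_2)$. The main obstacle—really the only delicate point—is verifying the chain $u^q\le a^{q/p-1}v^{q/p}\cdot a \le a^{q/p-1}v\cdot\text{(coefficients)}$ and, in particular, that $v^{q/p}\le v$ is legitimate here; this rests on $v\ge 1$ and $p\ge q>0$, exactly the hypotheses supplied, and on $a^{q/p-1}$ being well-defined and bounded on the compact rectangle, which follows from $a>0$ and continuity. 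Everything downstream is a mechanical transcription of Theorems~\ref{teo1}--\ref{teo2}.
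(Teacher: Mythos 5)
Your strategy coincides with the paper's own proof: normalize by $a$ under the integral sign (as in Theorem~\ref{teo2}), freeze the outer arguments at an arbitrary point $(t_1^\ast,t_2^\ast)$ (as in Theorem~\ref{thm:abv}), use $v^{q/p}\le v$, and finish with the one-variable inequality of \cite[Theorem~5.4]{inesurvey}. However, one step fails as written: you froze $a$ as well. From your first display you obtain only
\begin{equation*}
u^p(t_1,t_2)\le a(t_1^\ast,t_2^\ast)\,v(t_1,t_2),
\qquad\mbox{hence}\qquad
\frac{u^q(t_1,t_2)}{a(t_1,t_2)}\le
\frac{a^{q/p}(t_1^\ast,t_2^\ast)}{a(t_1,t_2)}\,v^{q/p}(t_1,t_2),
\end{equation*}
while your next display invokes $u^q/a\le a^{q/p-1}(t_1,t_2)\,v^{q/p}$, which is equivalent to $u^p\le a(t_1,t_2)\,v$. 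Since $q/p>0$ and $a(t_1,t_2)\le a(t_1^\ast,t_2^\ast)$ on the rectangle, the coefficient you actually derived, $a^{q/p}(t_1^\ast,t_2^\ast)\,a^{-1}(t_1,t_2)$, is \emph{larger} than $a^{q/p-1}(t_1,t_2)$, so the implication runs the wrong way. Carrying your frozen-$a$ version through to the end (and then renaming $(t_1^\ast,t_2^\ast)$ as $(t_1,t_2)$) produces an exponential whose integrand is $f(t_1,t_2)g(t_1,t_2,t_1,s_2)\,a^{q/p}(t_1,t_2)\,a^{-1}(t_1,s_2)$ rather than $f(t_1,t_2)g(t_1,t_2,t_1,s_2)\,a^{q/p-1}(t_1,s_2)$; that is a weaker estimate than the one the theorem asserts.

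The repair is exactly what the paper does: do not freeze $a$. Keep
$u^p(t_1,t_2)\le a(t_1,t_2)\left(1+f(t_1,t_2)\int_{a_1}^{t_1}\int_{a_2}^{t_2}g(t_1,t_2,s_1,s_2)\frac{u^q(s_1,s_2)}{a(s_1,s_2)}\Delta_1 s_1\Delta_2 s_2\right)$
with the unfrozen $a(t_1,t_2)$ in front, and freeze only $f$ and $g$ when defining $v$. Then on the rectangle one has $u^p\le a(t_1,t_2)\,v(t_1,t_2)$ (this uses the monotonicity of $f$ and, exactly as in the paper's own proofs of Theorem~\ref{thm:abv} and of the present theorem, an implicit monotonicity of $g$ in its first two arguments), from which $u^q/a\le a^{q/p-1}(t_1,t_2)\,v^{q/p}$ follows legitimately. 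The remainder of your argument then closes verbatim, and your final step is unaffected, since at the point $(t_1^\ast,t_2^\ast)$ the frozen and unfrozen normalizations agree.
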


\begin{proof}
Since $a(t_1,t_2)$ is positive and nondecreasing on
$(t_1,t_2)\in\tilde{\mathbb{T}}_1\times\tilde{\mathbb{T}}_2$,
inequality (\ref{eq:6:sec}) implies that
$$u^p(t_1,t_2)\leq a(t_1,t_2)\left(1+f(t_1,t_2)\int_{a_1}^{t_1}\int_{a_2}^{t_2}
g(t_1,t_2,s_1,s_2)\frac{u^q(s_1,s_2)}{a(s_1,s_2)}\Delta_1
s_1\Delta_2 s_2\right).$$  Fix $t_1^\ast\in\tilde{\mathbb{T}}_1$
and $t_2^\ast\in\tilde{\mathbb{T}}_2$ arbitrarily and define a
function $v(t_1,t_2)$ on
$[a_1,t_1^\ast]\cap\tilde{\mathbb{T}}_1\times[a_2,t_2^\ast]\cap\tilde{\mathbb{T}}_2$
by
$$v(t_1,t_2)=1+f(t_1^\ast,t_2^\ast)\int_{a_1}^{t_1}\int_{a_2}^{t_2}
g(t_1^\ast,t_2^\ast,s_1,s_2)\frac{u^q(s_1,s_2)}{a(s_1,s_2)}\Delta_1
s_1\Delta_2 s_2.$$ Then,
\begin{align*}
\frac{\partial}{\Delta_2 t_2}\left(\frac{\partial
v(t_1,t_2)}{\Delta_1
t_1}\right)&=f(t_1^\ast,t_2^\ast)g(t_1^\ast,t_2^\ast,t_1,t_2)\frac{u^q(t_1,t_2)}{a(t_1,t_2)}\\
&\leq
f(t_1^\ast,t_2^\ast)g(t_1^\ast,t_2^\ast,t_1,t_2)a^{\frac{q}{p}-1}(t_1,t_2)v^{\frac{q}{p}}(t_1,t_2).
\end{align*}
Since $v^{\frac{q}{p}}(t_1,t_2)\leq v(t_1,t_2)$, we have that
$$\frac{\partial}{\Delta_2 t_2}\left(\frac{\partial
v(t_1,t_2)}{\Delta_1 t_1}\right)\leq
f(t_1^\ast,t_2^\ast)g(t_1^\ast,t_2^\ast,t_1,t_2)a^{\frac{q}{p}-1}(t_1,t_2)v(t_1,t_2).$$
We can follow the same steps as done before to reach the inequality
$$\frac{\partial v(t_1,t_2)}{\Delta_1 t_1}\leq\int_{a_2}^{t_2}f(t_1^\ast,t_2^\ast)g(t_1^\ast,
t_2^\ast,t_1,s_2)a^{\frac{q}{p}-1}(t_1,s_2)\Delta_2 s_2 v(t_1,t_2).$$
Fix $t_2=t_2^\ast$ and put
$p(t_1):=\int_{a_2}^{t_2^\ast}f(t_1^\ast,t_2^\ast)
g(t_1^\ast,t_2^\ast,t_1,s_2)a^{\frac{q}{p}-1}(t_1,s_2)\Delta_2
s_2\in \mathcal{R}^+$. Again, an application of \cite[Theorem~5.4]{inesurvey} gives
$$v(t_1,t_2^\ast)\leq e_p(t_1,a_1),$$
and putting $t_1=t_1^\ast$ we obtain the desired inequality.
$\>\Box$
\end{proof}

We end this section by considering a particular time scale. Let
$\{\alpha_k\}_{k\in\mathbb{N}}$ be a sequence of positive numbers
and let
$$
t_0^\alpha\in\mathbb{R} \, , \quad
t_k^\alpha=t_0^\alpha+\sum_{n=1}^k\alpha_n,\ k\in\mathbb{N} \, ,
$$
where we assume that $\lim_{k\rightarrow\infty}t_k^\alpha=\infty$. Then, we define the following time scale:
$\mathbb{T}^\alpha=\{t_k^\alpha:k\in\mathbb{N}_0\}$. For
$p\in\mathcal{R}$ we have (\textrm{cf.} \cite[Example~4.6]{Agarwal}):
\begin{equation}
\label{e1} e_p(t_k^\alpha,t_0^\alpha)=\prod_{n=1}^k(1+\alpha_n
p(t_{n-1})),\ \mbox{for all}\ k\in\mathbb{N}_0.
\end{equation}
Given two sequences $\{\alpha_k,\beta_k\}_{k\in\mathbb{N}}$ and
two numbers $t_0^\alpha,t_0^\beta\in\mathbb{R}$ as above, we
define the two time scales
$\mathbb{T}^\alpha=\{t_k^\alpha:k\in\mathbb{N}_0\}$ and
$\mathbb{T}^\beta=\{t_k^\beta:k\in\mathbb{N}_0\}$. We state now our last corollary:

\begin{corollary}
\label{cor:3.1}
Let $u(t,s)$, $a(t,s)$, and $f(t,s)$, defined on
$\mathbb{T}^\alpha\times\mathbb{T}^\beta$, be nonnegative with
$a$ and $f$ nondecreasing. Further, let $g(t,s,\tau,\xi)$, where
$(t,s,\tau,\xi)\in\mathbb{T}^\alpha\times
\mathbb{T}^\beta\times\mathbb{T}^\alpha\times\mathbb{T}^\beta$
with $\tau\leq t$ and $\xi\leq s,$ be a nonnegative function. If
$p$ and $q$ are two positive real numbers such that $p\geq q$ and
if
\begin{equation}
\label{eq:ab:not}
u^p(t,s)\leq
a(t,s)+f(t,s)\sum_{\tau\in[t_0^\alpha,t)}\sum_{\xi\in[t_0^\beta,s)}
\mu^\alpha(\tau)\mu^\beta(\xi)g(t,s,\tau,\xi)u^q(\tau,\xi)
\end{equation}
for all $(t,s)\in\mathbb{T}^\alpha\times\mathbb{T}^\beta$, where
$\mu^\alpha$ and $\mu^\beta$ are the graininess functions of
$\mathbb{T}^\alpha$ and $\mathbb{T}^\beta$, respectively, then
\begin{equation*}
u(t,s)\leq
a^{\frac{1}{p}}(t,s)\left[e_{\int_{t_0^\beta}^{s}f(t,s)
a^{\frac{q}{p}-1}(t,\xi)g(t,s,t,\xi)\Delta^\beta
\xi}(t,t_0^\alpha)\right]^{\frac{1}{p}}
\end{equation*}
for all $(t,s)\in\mathbb{T}^\alpha\times\mathbb{T}^\beta$, where
$e$ is given by (\ref{e1}).
\end{corollary}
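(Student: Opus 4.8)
The plan is to derive Corollary~\ref{cor:3.1} as a direct specialization of the nonlinear theorem proved immediately before it to the two isolated-point time scales $\mathbb{T}_1=\mathbb{T}^\alpha$ and $\mathbb{T}_2=\mathbb{T}^\beta$, with the choices $a_1=t_0^\alpha$ and $a_2=t_0^\beta$. First I would check that the hypotheses transfer. Because $\mathbb{T}^\alpha$ and $\mathbb{T}^\beta$ consist entirely of isolated (right-scattered) points, every real-valued function on them is automatically rd-continuous, so the continuity requirements on $u,a,f$ and on $g$ hold for free; the nonnegativity and the monotonicity of $a$ and $f$ are assumed, and $p\geq q>0$ is given. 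Thus the only genuine work is to recognize that the double sum in~(\ref{eq:ab:not}) is exactly the two-dimensional delta integral in~(\ref{eq:6:sec}) read on these particular time scales.

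For this I would invoke the standard reduction of a delta integral over a time scale of right-scattered points to a graininess-weighted sum,
\[
\int_{t_0^\alpha}^{t} h(\tau)\,\Delta^\alpha\tau=\sum_{\tau\in[t_0^\alpha,t)}\mu^\alpha(\tau)\,h(\tau),
\]
together with its analogue on $\mathbb{T}^\beta$. Applying this once in each variable turns the iterated integral $\int_{t_0^\alpha}^{t}\int_{t_0^\beta}^{s} g(t,s,\tau,\xi)u^q(\tau,\xi)\,\Delta^\alpha\tau\,\Delta^\beta\xi$ into precisely the double sum $\sum_{\tau}\sum_{\xi}\mu^\alpha(\tau)\mu^\beta(\xi)\,g(t,s,\tau,\xi)\,u^q(\tau,\xi)$ appearing in~(\ref{eq:ab:not}). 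Hence assumption~(\ref{eq:ab:not}) of the corollary coincides with assumption~(\ref{eq:6:sec}) of the preceding theorem on $\mathbb{T}^\alpha\times\mathbb{T}^\beta$.

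With the hypotheses matched, the preceding theorem applies verbatim and gives
\[
u(t,s)\leq a^{\frac{1}{p}}(t,s)\left[e_{\int_{t_0^\beta}^{s}f(t,s)a^{\frac{q}{p}-1}(t,\xi)g(t,s,t,\xi)\Delta^\beta\xi}(t,t_0^\alpha)\right]^{\frac{1}{p}},
\]
which is the asserted bound. It then remains only to make the outer exponential explicit: since it is evaluated on $\mathbb{T}^\alpha$ between the base point $t_0^\alpha$ and a point $t=t_k^\alpha$, formula~(\ref{e1}) supplies its finite-product representation, which is the content of the closing phrase ``where $e$ is given by~(\ref{e1})''.

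I expect no serious obstacle; the proof is essentially the dictionary translating delta integrals into graininess-weighted sums. The one point that must be verified before the exponential is used is that the exponent $p(\cdot)=\int_{t_0^\beta}^{s}f\,a^{\frac{q}{p}-1}g\,\Delta^\beta\xi$ is positively regressive, i.e.\ $p\in\mathcal{R}^+$. This holds automatically here: $f$ and $g$ are nonnegative and $a^{\frac{q}{p}-1}$ is positive (as $a>0$), so the defining delta integral is nonnegative, whence $1+\mu^\alpha(\tau)p(\tau)\geq 1>0$ at every point of $\mathbb{T}^\alpha$ and the product in~(\ref{e1}) is well defined and positive.
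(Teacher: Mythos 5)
Your proposal is correct and follows exactly the route the paper intends: the corollary is stated without explicit proof as a direct specialization of the preceding nonlinear theorem to the isolated-point time scales $\mathbb{T}^\alpha$ and $\mathbb{T}^\beta$, where delta integrals reduce to graininess-weighted sums and the exponential takes the product form (\ref{e1}). Your added verifications (automatic rd-continuity on discrete time scales and positive regressivity of the exponent) are precisely the details the paper leaves implicit.
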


\begin{remark}
In (\ref{eq:ab:not}) we are slightly
abusing on notation by considering
$[t_0^\alpha,t)=[t_0^\alpha,t)\cap\mathbb{T}^\alpha$ and
$[t_0^\beta,t)=[t_0^\beta,t)\cap\mathbb{T}^\beta$.
\end{remark}


\section{\normalsize An application}
\label{aplic}

Let us consider the partial delta dynamic equation
\begin{equation}
\label{diff1}
\frac{\partial}{\Delta_2 t_2}\left(\frac{\partial
u^2(t_1,t_2)}{\Delta_1
t_1}\right)=F(t_1,t_2,u(t_1,t_2))
\end{equation}
under given initial boundary conditions
\begin{equation}
\label{diff1:bc}
u^2(t_1,0)=g(t_1),\ u^2(0,t_2)=h(t_2),\ g(0)=0,\ h(0)=0,
\end{equation}
where we are assuming $a_1=a_2=0$,
$F\in
C(\tilde{\mathbb{T}}_1\times\tilde{\mathbb{T}}_2\times\mathbb{R}_0^+,\mathbb{R}_0^+)$,
$g\in C(\tilde{\mathbb{T}}_1,\mathbb{R}_0^+)$, $h\in
C(\tilde{\mathbb{T}}_2,\mathbb{R}^+_0)$, with $g$ and $h$
nondecreasing functions and positive on their domains except at zero.

\begin{theorem}
Assume that on its domain, $F$ satisfies $$F(t_1,t_2,u)\leq
t_2u.$$ If $u(t_1,t_2)$ is a solution of the IBVP (\ref{diff1})-(\ref{diff1:bc}) for $(t_1,t_2)\in\tilde{\mathbb{T}}_1\times\tilde{\mathbb{T}}_2$, then
\begin{equation}
\label{in7} u(t_1,t_2)\leq
\sqrt{(g(t_1)+h(t_2))}\left[e_{\int_{0}^{t_2}s_2
(g(t_1)+h(s_2))^{-\frac{1}{2}}\Delta_2
s_2}(t_1,0)\right]^{\frac{1}{2}}
\end{equation}
for $(t_1,t_2)\in\tilde{\mathbb{T}}_1\times\tilde{\mathbb{T}}_2$,
except at the point $(0,0)$.
\end{theorem}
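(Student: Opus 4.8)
The plan is to reduce the initial boundary value problem (\ref{diff1})--(\ref{diff1:bc}) to a nonlinear integral inequality of exactly the shape handled by Theorem~\ref{teo2}, and then to read off the estimate (\ref{in7}) directly from the conclusion (\ref{in4}) with the choice $p=2$, $q=1$. First I would delta integrate the dynamic equation (\ref{diff1}). Writing $w(t_1,t_2)=u^2(t_1,t_2)$ and integrating both sides with respect to the second variable from $0$ to $t_2$, the left-hand side telescopes by the fundamental theorem of the time scales calculus to $\frac{\partial w(t_1,t_2)}{\Delta_1 t_1}-\frac{\partial w(t_1,0)}{\Delta_1 t_1}$. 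The boundary condition $u^2(t_1,0)=g(t_1)$ then lets me replace the second term by $g^{\Delta}(t_1)$.

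A second delta integration, now with respect to the first variable from $0$ to $t_1$, yields $w(t_1,t_2)-h(t_2)$ on the left (using $u^2(0,t_2)=h(t_2)$), while on the right the single integral of $g^{\Delta}$ collapses to $g(t_1)$ because $g(0)=0$. This produces the exact representation
$$u^2(t_1,t_2)=g(t_1)+h(t_2)+\int_0^{t_1}\int_0^{t_2}F(s_1,s_2,u(s_1,s_2))\,\Delta_1 s_1\,\Delta_2 s_2,$$
where the order of the iterated integrations is immaterial by Fubini's theorem on time scales. Invoking the structural hypothesis $F(t_1,t_2,u)\leq t_2 u$ to majorize the integrand then gives
$$u^2(t_1,t_2)\leq g(t_1)+h(t_2)+\int_0^{t_1}\int_0^{t_2} s_2\, u(s_1,s_2)\,\Delta_1 s_1\,\Delta_2 s_2,$$
which is precisely inequality (\ref{in3}) with $a(t_1,t_2)=g(t_1)+h(t_2)$ and $f(s_1,s_2)=s_2$.

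It remains to apply Theorem~\ref{teo2} with $p=2$ and $q=1$; since $\frac{q}{p}-1=-\frac{1}{2}$, its conclusion (\ref{in4}) reads off as exactly (\ref{in7}) after taking square roots. The one point demanding care is verifying the hypotheses of Theorem~\ref{teo2}, and in particular the \emph{strict} positivity of $a$. The function $f(s_1,s_2)=s_2$ is evidently nonnegative and nondecreasing, and $a=g+h$ is nondecreasing in each variable because $g$ and $h$ are; but Theorem~\ref{teo2} requires $a\in C(\tilde{\mathbb{T}}_1\times\tilde{\mathbb{T}}_2,\mathbb{R}^+)$, whereas $g(0)=h(0)=0$ forces $a(0,0)=0$. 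Since $g$ and $h$ are positive off zero, one has $g(t_1)+h(t_2)>0$ for every $(t_1,t_2)\neq(0,0)$, so the hypotheses hold on $(\tilde{\mathbb{T}}_1\times\tilde{\mathbb{T}}_2)\setminus\{(0,0)\}$ and the theorem applies there. This is exactly why the conclusion must exclude the point $(0,0)$, and it is the only genuine obstacle in the argument; the two delta integrations are routine applications of the fundamental theorem.
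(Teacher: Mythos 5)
Your proposal is correct and follows essentially the same route as the paper's proof: convert the IBVP to the delta integral equation, majorize $F(s_1,s_2,u)$ by $s_2u$, and apply Theorem~\ref{teo2} with $a(t_1,t_2)=g(t_1)+h(t_2)$, $f(t_1,t_2)=t_2$, $p=2$, $q=1$. In fact you supply two details the paper leaves implicit --- the derivation of the integral equation via two delta integrations and the observation that $a(0,0)=0$ violates the positivity hypothesis of Theorem~\ref{teo2}, which is precisely why $(0,0)$ is excluded from the conclusion.
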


\begin{proof}
Let $u(t_1,t_2)$ be a solution of the IBVP (\ref{diff1})-(\ref{diff1:bc}). Then, it satisfies the
following delta integral equation:
$$u^2(t_1,t_2)=g(t_1)+h(t_2)
+\int_0^{t_1}\int_0^{t_2}F(s_1,s_2,u(s_1,s_2))\Delta_1 s_1\Delta_2 s_2.$$
The hypothesis on $F$ imply that
$$u^2(t_1,t_2)\leq g(t_1)+h(t_2)+\int_0^{t_1}\int_0^{t_2}s_2 u(s_1,s_2)\Delta_1 s_1\Delta_2 s_2.$$
An application of Theorem~\ref{teo2} with
$a(t_1,t_2)=g(t_1)+h(t_2)$ and $f(t_1,t_2)=t_2$ gives (\ref{in7}).
$\>\Box$ \end{proof}


\renewcommand{\refname}{\normalsize References}

\small


\end{document}